\theoremstyle{plain}
\newtheorem{theorem}{Theorem}[section]
\newtheorem{question}[theorem]{Question}
\newtheorem{lemma}[theorem]{Lemma}
\theoremstyle{definition}
\newtheorem{construction}[theorem]{Construction}
\theoremstyle{remark}
\newtheorem{example}[theorem]{Example}
\newcommand{\bbC}{\mathbb C}
\newcommand{\bbN}{\mathbb N}
\newcommand{\bbR}{\mathbb R}
\newcommand{\la}{\lambda}  
\newcommand{\vpi}{\varphi}
     \newcommand{\sA}{\mathcal A}
     \newcommand{\sD}{\mathcal D}
\newcommand{\Cab}{C[a, b]}
\newcommand{\MnCab}{M_n(\Cab)}
\begin{document}

\title{Diagonalizing hermitian matrices of continuous functions}


\address{Department of Mathematics \\
        Iowa State University \\
        Ames, IA 50011}

\author[Cyr]{Justin Cyr}
\curraddr{Department of Mathematics \\
        Syracuse University \\
        215 Carnegie Building \\
        Syracuse, NY 13244-1150}
\email{jrcyr@syr.edu}

\author[Ekstrand]{Jason Ekstrand}
\email{ekstrand@iastate.edu}

\author[Meyers]{Nathan Meyers}
\curraddr{Department of Mathematics \\
        Bowdoin College \\
        Brunswick ME 04011}
\email{nathanlmeyers@gmail.com}

\author[Peoples]{Crystal Peoples}
\email{peoplesc@iastate.edu}

\author[Peters]{Justin Peters}
\email{peters@iastate.edu}

\begin{abstract} The problem of diagonalizing hermitian matrices of continuous fiunctions was studied by Grove and Pederson in 1984. While diagonalization is not possible in general,  in the presence
of differentiability conditions  we are able to obtain positive results in the case of $2\times 2$ matrices. It remains open whether our results can be extended to $n\times n$ matrices.
\end{abstract}

\keywords{Continuous Diagonalization} 

\subjclass{primary: 47A56; secondary: 15A54, 46B15, 15A21}

 \thanks{The authors acknowledge partial support from the National Science Foundation, DMS-0750986}

\maketitle

\section{Introduction}
In this note we consider a special case of the problem of diagonalizing hermitian matrices of continuous functions.
 We say that a hermitian element $A \in \MnCab$
is \emph{diagonalizable} if there is a unitary element $U \in \MnCab$ such that
$U^*AU$ is diagonal.  Thus, for each $t \in [a, b]$, $U(t)^*A(t)U(t)$ is
diagonal, and in particular each $U(t)$ is unitary.  We do not know who first
realized that hermitian elements of $\MnCab$ cannot, in general, be
diagonalized, but evidently this is folklore.  In 1983 Kadison (\cite{K})
proved that if $A \in M_n(L^{\infty}[a, b])$ is hermitian, then $A$ is
diagonalizable by a unitary in $M_n(L^{\infty}[a, b])$. In fact Kadison proved more generally that if $\sA$ is an abelian von Neumann algebra,
a hermitian element in $M_n(\sA)$ is diagonalizable by a unitary in $M_n(\sA).$

 However for $\MnCab$ the analogous statement is false.
(Example~\ref{E:nodiagonalization}) A year after Kadison's result appeared,
Grove and Pedersen proved that, for a large class of compact Hausdorff spaces $X$, if $A \in M_n(C(X))$ is hermitian with the property
that the eigenvalues of $A(x)$ are distinct for all $x \in X$, then $A$ is
diagonalizable in $M_n(C(X))$.   Their proof, however, is
an existence proof, and the question of how such a diagonalization can be
achieved is not addressed.  \cite{GP} also considers the question of which compact Hausdorff spaces $X$
have the property that every hermitian element $A \in M_n(C(X))$ is diagonalizable by a unitary in $M_n(C(X)).$

Here we consider the case where $X = [a, b],$ a compact interval.
Surprisingly, even for $2\times 2$ matrices it is not completely trivial to show that that if $A \in M_2(C[a, b])$ is hermitian with distinct eigenvalues,
then there is a unitary $U \in M_2(C[a, b]) $ such that $U^*AU $ is diagonal. In fact,  
for $2\times 2$ matrices we provide an algorithm
for diagonalization in case where the two eigenvalues are distinct.   

Our main result, Theorem~\ref{t:diag2}, is to obtain a diagonalization theorem for hermitian elements $A$ of $M_2(C[a, b])$ in which we relax the condition
that the eigenvalues of $A(t)$ must be distinct for all $t \in [a, b].$ To do this we suppose $A$ is continuously differentiable, and at those points $t$ where the
the eigenvalues of $A(t)$ coincide, the eigenvalues of $A'(t)$ are distinct.

This result gives rise to a variety of related questions which, as far as we are aware, have not been explored. The most obvious question is whether there is an analogue of
Theorem~\ref{t:diag2} for $\MnCab.$  Our proof uses the fact that one has an explicit formula for the eigenvalues in the $2\times 2$ case; thus an entirely different approach would be
required in the general case.
 Another question that could be asked is, if one assumes ``sufficient'' diffierentiability, then the conditions on the eigenvalues can be removed. Example~\ref{e:Cinftynotenough} shows that
even if the hermitian matrix has $C^{\infty}$ entries, it may not be diagonalizable. Finally we raise the question whether more can be done if the entries are analytic (\ref{q:analytic})

\section{Diagonalization} \label{s:diagonalization}

We begin with an example showing that continuous diagonalization is not always possible.
\begin{example} \label{E:nodiagonalization}
We work on $M_2(C[-1, 1]).$
Let $\chi$ be the characteristic function of the interval $[0, 1],$ and let
\[ A(t) =
t\left[
\begin{matrix}
1 &  \chi \\
\chi &  \chi
\end{matrix}
\right] .\]
Now the unitary matrix $U(t)  = I$ diagonalizes $A(t)$ for $t \in [-1, 0]$ and 
\[ U(t) = 
\frac{1}{\sqrt{2}}\left[
\begin{matrix}
1 & 1 \\
1 & -1
\end{matrix}
\right] \]
diagonalizes $A(t)$ for $t \in (0, 1].$
If $W(t)$ is another one parameter family of unitary matrices such that  $W(t)^*A(t)W(t) $ is diagonal, $\ t \in [-1, 1],$ then
 a calculation shows that  $W(t) = U(t) D(t)$ where
$D(t)$ is a diagonal unitary.  Thus, there is no choice of $W(t)$ which is continuous at $t = 0.$

\end{example}

\begin{theorem}\label{t:DiagonalizeHerm}
Let $A \in M_2(\Cab)$ such that $A(t)$ is Hermitian and $A(t)$ has distinct
eigenvalues for every $t \in [a, b]$. Then there exists $U \in M_2(\Cab)$
such that $U(t)$ is unitary and $U(t)^*A(t)U(t)$ is diagonal for all $t \in [a,
b]$. Furthermore, if $A$ consists of real-valued functions then $U$ can be
chosen to be real-valued.
\end{theorem}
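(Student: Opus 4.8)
The plan is to build the diagonalizing unitary locally and patch the local solutions together, exploiting that $[a,b]$ is one-dimensional and that the eigenvalues stay separated. Since $A(t)$ is Hermitian with two distinct eigenvalues for every $t$, the eigenvalues $\la_1(t) < \la_2(t)$ are continuous functions of $t$ (this follows from continuity of the characteristic polynomial, whose roots are given by the explicit quadratic formula in the $2\times2$ case; here the discriminant $(\operatorname{tr}A)^2 - 4\det A$ is strictly positive, so the square root is continuous). Consequently the spectral projection $P(t)$ onto the $\la_1(t)$-eigenspace is continuous in $t$: one can write it via the holomorphic functional calculus / Riesz integral $P(t) = \frac{1}{2\pi i}\oint (zI - A(t))^{-1}\,dz$ around a contour separating the two eigenvalues, and this contour can be chosen locally uniformly, so $P$ is norm-continuous.

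Next I would produce a continuous choice of unit eigenvector. Fix $t_0$ and a unit vector $v_0$ in the range of $P(t_0)$. For $t$ near $t_0$, $P(t)v_0 \neq 0$, so $v(t) := P(t)v_0 / \|P(t)v_0\|$ is a continuous unit eigenvector for $\la_1(t)$ on a neighborhood of $t_0$. In the $2\times2$ case, choosing the second column to be a unit vector orthogonal to $v(t)$ — e.g. if $v(t) = (\alpha(t), \beta(t))^{\mathsf T}$ then take $w(t) = (-\overline{\beta(t)}, \overline{\alpha(t)})^{\mathsf T}$ — gives a continuous local unitary $U(t) = [\,v(t)\ \ w(t)\,]$ with $U(t)^*A(t)U(t)$ diagonal. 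Now cover $[a,b]$ by finitely many such intervals; on overlaps two local eigenvector choices for $\la_1$ differ by a continuous unimodular scalar (since the eigenspace is one-dimensional), and similarly for $\la_2$. The task is to reconcile these phase discrepancies into a single global unitary.

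The main obstacle — and the reason Example~\ref{E:nodiagonalization} is relevant — is exactly this patching: a priori the transition scalars between consecutive charts could fail to be adjustable to the identity. The point that makes it work over an interval (as opposed to, say, a circle, where a genuine obstruction lives) is that $[a,b]$ is contractible: ordering the covering intervals $I_1, \dots, I_N$ from left to right, I would inductively rescale the eigenvector on $I_{k+1}$ by the continuous unimodular function that matches it to the already-fixed choice on $I_1 \cup \dots \cup I_k$ at the overlap, using that a continuous unimodular function on an interval has a continuous logarithm (or simply extending the overlap scalar by a homotopy). This yields globally continuous unit eigenvectors $v, w$ on all of $[a,b]$, hence a global $U = [\,v\ \ w\,] \in M_2(\Cab)$ with $U^*AU$ diagonal. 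For the real case, note that if $A$ has real entries then $A(t)$ is real symmetric; the spectral projection $P(t)$ then has real entries, one can take $v_0$ real, and all the constructions above (including $w(t) = (-\beta(t), \alpha(t))^{\mathsf T}$ and the patching, where the transition scalars now lie in $\{\pm1\}$ and are constant on each overlap component, hence removable) stay within real matrices; I would remark that the $\{\pm1\}$-valued transitions on an interval are again trivializable by the same left-to-right induction.
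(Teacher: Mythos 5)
Your argument is correct, but it takes a genuinely different route from the paper's. The paper constructs eigenvectors by an explicit algebraic formula: using Lemma~\ref{l:hzeroiff}, at each $t$ at least one of $f-\la$, $g-\la$ is nonzero, so one can take the eigenvector $\bigl[\tfrac{h}{\la-f},\,1\bigr]^{\mathsf T}$ or $\bigl[1,\,\tfrac{\bar h}{\la-g}\bigr]^{\mathsf T}$ accordingly, switching formulas when one denominator threatens to vanish and matching by a multiplicative constant at the switch point (Construction~\ref{c:EigenVecConstruction}); a separate argument (Lemma~\ref{l:EigenVecConstructionTerm}) is then needed to show that only finitely many switches occur. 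Your approach replaces this with the Riesz spectral projection $P(t)=\frac{1}{2\pi i}\oint (zI-A(t))^{-1}\,dz$ (equivalently, in the $2\times 2$ case, $P(t)=(A(t)-\la_2(t)I)/(\la_1(t)-\la_2(t))$), gets local continuous unit eigenvectors by normalizing $P(t)v_0$, and uses compactness of $[a,b]$ to reduce to finitely many charts, with the finiteness coming for free rather than from an ad hoc termination lemma. Your left-to-right patching is also sound, though it is simpler than you make it sound: one does not need a continuous logarithm or a homotopy extension of the transition scalar, only to multiply the new chart's eigenvector by the single constant $c_0=\langle \tilde v_{k+1}(b_k),v_k(b_k)\rangle$ that reconciles the two choices at the right endpoint $b_k$ of the region already covered, since that is the only point where the two pieces must agree. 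The trade-off between the two proofs: the paper's construction is deliberately elementary and algorithmic (as emphasized in the introduction), staying within quadratic formulas, whereas yours uses functional-calculus machinery but has the advantage of generalizing verbatim to $M_n(\Cab)$ with pairwise-distinct eigenvalues (each eigenspace is still one-dimensional, so the transition data are still unimodular scalars), recovering the Grove--Pedersen result on the interval. The treatment of the real case in both proofs is essentially the same observation: everything in sight can be kept real, and the residual $\{\pm1\}$ ambiguities are absorbed the same way.
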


It will be useful to introduce notation for $A(t).$
\begin{equation}
\label{e:def2x2matrix}
A(t)
= \begin{bmatrix}
f(t) & h(t)  \\
\bar h(t) & g(t)
\end{bmatrix}
\in M_2(\Cab)
\end{equation}
where $f,\ g$ are real-valued functions in $\Cab,$ and $h$ is complex-valued.
From the characteristic polynomial for $A(t)$ 
\begin{align}
p_{A(t)}(x)
&= \det[x I - A(t)] \notag \\
&= x^2 - (f(t) + g(t))x + f(t) g(t) - |h(t)|^2 \notag
\end{align}
 we obtain the two eigenfunctions
\begin{align}
\la^+(t)
&= \frac{f(t) + g(t) + \sqrt{(f(t) - g(t))^2 + 4|h(t)|^2}}{2}
        \label{e:ExplicitEval+} \\
\la^-(t)
&= \frac{f(t) + g(t) - \sqrt{(f(t) - g(t))^2 + 4|h(t)|^2}}{2}
        \label{e:ExplicitEval-}
\end{align}

Note that $\la^+(t) = \la^-(t)$ if and only if $A(t) = \la^+(t) I$.

\begin{lemma}
\label{l:hzeroiff}
Let $A(t)$ be as in (\ref{e:def2x2matrix}) and assume that $A(t)$ has distinct
eigenvalues for all $t \in [a, b]$. Let $\la(t)$ be an eigenvalue of $A(t)$ for
every $t \in [a, b]$. For $t \in [a,b]$, the following are equivalent:
\begin{enumerate}
\item $h(t) = 0$,
\item $f(t) - \lambda(t) = 0$ or $g(t) - \lambda(t) = 0$,
\item either $f(t) - \lambda(t) = 0$ or $g(t) - \lambda(t) = 0$, but not both.
\end{enumerate}
\end{lemma} 
\begin{proof}
If $h(t) = 0$ then $A(t)$ is diagonal, and since $\la(t)$ is an eigenvalue for
$A(t)$, $f(t) - \lambda(t) = 0$ or $g(t) - \lambda(t) = 0$ but not both since
$A(t)$ has distinct eigenvalues. If $f(t) - \la(t) = 0$ or $g(t) - \la(t) = 0$
then by equations (\ref{e:ExplicitEval+}) and (\ref{e:ExplicitEval-}), $|h(t)|^2 = 0$ and so $h = 0$.
\end{proof}

Let $\la$ refer either to $\la^+$ or $\la^-.$  We wish to find ${\bf v}(t) = \begin{bmatrix} v_1(t) \\ v_2(t) \end{bmatrix}$
continuous and non-vanishing such that $A(t){\bf v}(t) = \la(t){\bf v}(t)$.
Equivalently,
\begin{align}
(f(t) - \la(t))v_1(t) + h(t) v_2(t) &= 0 \label{e:2x2Evec1}\\
\bar h(t) v_1(t) + (g(t) - \la(t))v_2(t) &= 0 \label{e:2x2Evec2}
\end{align}
along with the condition that $v_1(t)$ and $v_2(t)$ are never simultaneously
zero. The following construction allows us to do this explicitly.

\begin{construction} \label{c:EigenVecConstruction} Let $A \in M_2(C[a, b]$ have distinct eigenvalues for all $t \in [a, b],$ 
and fix $\la$ to be either $\la^+$ or $\la^-$ throughout the construction.
Input: $n \in \bbN$, $t_n \in [a, b)$, and ${\bf v}(t_n) \in \bbC^2$ such that
${\bf v}(t_n) \ne 0$ and $A(t_n){\bf v}(t_n) = \la(t_n){\bf v}(t_n)$.

By Lemma \ref{l:hzeroiff}, we cannot have $\la(t_n) = f(t_n) = g(t_n). $
Suppose $f(t_n) \ne \la(t_n)$. Then, by continuity, $f -
\la$ is nonzero in a neighborhood of $t_n$. Let 
\[
t_{n + 1} = \min (\{t \in [t_n, b] : f(t) = \la(t)\} \cup \{b\})
\text{.}
\]
On $[t_n, t_{n + 1})$, define
\begin{equation} \label{e:EigenVecDef1}
{\bf u}(t) =
\begin{bmatrix}
u_1(t) \\
u_2(t)
\end{bmatrix}
= \begin{bmatrix}
\frac{h(t)}{\la(t) - f(t)} \\
1
\end{bmatrix}
\text{.}
\end{equation}
Then $u_1$ and $u_2$ satisfy (\ref{e:2x2Evec1}) on $[t_n, t_{n + 1})$. Since
$A(t) - \la(t) I$ is singular, (\ref{e:2x2Evec1}) and (\ref{e:2x2Evec2}) form a
dependent set of equations, and so (\ref{e:2x2Evec2}) is satisfied. Therefore,
${\bf u}(t)$ is an eigenvector of $A(t)$ for $\la(t)$ for all $t \in [t_n, t_{n
+ 1})$. Since $A(t)$ has distinct eigenvalues,  the geometric multiplicity is one and there
is some constant $c_1 \in \bbC$ with $c_1 \ne 0$ such that ${\bf v}(t_n) =
c_1{\bf u}(t_n)$.

If $\la \ne f$ on all of $[t_n, b]$, then $t_{n + 1} = b$ and we can 
extend ${\bf u}(t)$ to $[t_n, t_{n + 1}]$ using (\ref{e:EigenVecDef1}). Then let ${\bf
v}(t) = c_1{\bf u}(t)$ on $[t_n, b]$ and we are done.

Otherwise, $\la(t_{n + 1}) = f(t_{n + 1})$. Then, by Lemma \ref{l:hzeroiff},
$\la(t_{n + 1}) \ne g(t_{n + 1})$ and by continuity there is some $t_n <
\alpha < t_{n + 1}$ such that $\la \ne f$ on $[\alpha, t_{n + 1}]$. On
$[\alpha, t_{n + 1}]$, define
\begin{equation} \label{e:EigenVecDef2}
{\bf w}(t) =
\begin{bmatrix}
w_1(t) \\
w_2(t)
\end{bmatrix}
= \begin{bmatrix}
1 \\
\frac{\bar h(t)}{\la(t) - g(t)}
\end{bmatrix}
\text{.}
\end{equation}
By the same argument as above, ${\bf w}(t)$ is an eigenvector of $A(t)$ for
$\la(t)$ for all $t \in [\alpha, t_{n + 1}]$. Furthermore, since $\la(\alpha)$
has multiplicity one, there is some nonzero $c_2 \in \bbC$ such that ${\bf
u}(\alpha) = c_2 {\bf w}(\alpha)$. Then, let ${\bf v} = c_1 {\bf u}$ on $[t_n,
\alpha]$ and let ${\bf v} = c_1 c_2 {\bf w}$ on $[\alpha, t_{n + 1}]$. Then we have
 that ${\bf v}$ is continuous and non-vanishing on $[t_n, t_{n +
1}]$ and that $A(t){\bf v} (t) = \la(t) {\bf v}(t)$ for all $t \in [t_n, t_{n +
1}]$.

Finally, in the case where $f(t_n) = \la(t_n)$, we reverse the roles of (\ref{e:EigenVecDef1}) and (\ref{e:EigenVecDef2}).
\end{construction}

\begin{lemma}\label{l:EigenVecConstructionTerm}
For $A \in M_2(\Cab)$, construction \ref{c:EigenVecConstruction} terminates
after finitely many steps.
\end{lemma}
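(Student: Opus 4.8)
The plan is to show that the sequence of breakpoints $t_0 < t_1 < t_2 < \cdots$ generated by iterating Construction~\ref{c:EigenVecConstruction} cannot accumulate in $[a,b]$, because consecutive breakpoints are separated by a fixed positive distance, with at most one exception at the very first step. Once that is in hand, since all the $t_n$ lie in the bounded interval $[a,b]$ there can be only finitely many of them, so some $t_N$ must equal $b$, which is exactly the termination condition of the construction.

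First I would fix $\la$ to be whichever of $\la^+$, $\la^-$ is used in the construction and introduce the two sets
\[
Z_f = \{\, t \in [a,b] : f(t) = \la(t) \,\}, \qquad
Z_g = \{\, t \in [a,b] : g(t) = \la(t) \,\}.
\]
Since $\la$ is a continuous function of $t$ (its only non-polynomial ingredient is the square root of the nonnegative continuous function $(f-g)^2 + 4|h|^2$), the sets $Z_f$ and $Z_g$ are closed, hence compact. By Lemma~\ref{l:hzeroiff} one can never have $f(t) = \la(t)$ and $g(t) = \la(t)$ simultaneously, so $Z_f \cap Z_g = \varnothing$. If one of $Z_f$, $Z_g$ is empty, a direct inspection of the construction shows it halts in at most two steps, so I would assume both are nonempty and set $\delta := \operatorname{dist}(Z_f, Z_g)$, which is strictly positive because $Z_f$ and $Z_g$ are disjoint compact sets.

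Next I would track how the construction passes from $t_n$ to $t_{n+1}$. In the main case $f(t_n) \neq \la(t_n)$ one has $t_{n+1} = \min(\{t \in [t_n,b] : f(t) = \la(t)\} \cup \{b\})$, so $t_{n+1} \in Z_f \cup \{b\}$; and since $f - \la$ is nonzero on a neighborhood of $t_n$ by continuity, $t_{n+1} > t_n$. In the reversed case $f(t_n) = \la(t_n)$ (so $t_n \in Z_f$ and, by Lemma~\ref{l:hzeroiff}, $g(t_n) \neq \la(t_n)$) one obtains symmetrically $t_{n+1} \in Z_g \cup \{b\}$ with $t_{n+1} > t_n$. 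Consequently $t_1 \in Z_f \cup Z_g \cup \{b\}$, and whenever $t_n \in Z_f$ (resp.\ $Z_g$) with $t_n \neq b$, the next breakpoint lies in $Z_g \cup \{b\}$ (resp.\ $Z_f \cup \{b\}$). Hence for every $n \geq 1$ with $t_n, t_{n+1} \neq b$ the points $t_n$ and $t_{n+1}$ lie in opposite members of $\{Z_f, Z_g\}$, so $t_{n+1} - t_n \geq \delta$. The breakpoints are strictly increasing and contained in $[a,b]$, so there can be at most $2 + \lceil (b-a)/\delta \rceil$ of them, which proves termination.

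The only delicate bookkeeping — and it is slight — is keeping track of which of $f - \la$, $g - \la$ is being chased at each stage: Lemma~\ref{l:hzeroiff} is exactly what guarantees that at a breakpoint strictly interior to $[a,b]$ precisely one of these two functions vanishes, so that the search at the following stage is for a zero of the \emph{other} one and therefore cannot land within distance $\delta$ of the current breakpoint. Everything else is a compactness-plus-pigeonhole argument, and the single possibly ``short'' first step (which occurs only when $t_0 \notin Z_f \cup Z_g$) merely increases the count by one.
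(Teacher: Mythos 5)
Your proof is correct, and it reaches the conclusion by a genuinely different route than the paper. The paper argues by contradiction: assuming the construction never terminates, the increasing bounded sequence $\{t_n\}$ has a limit $c$, the alternation $t_n \in Z_f \Rightarrow t_{n+1} \in Z_g$ (and vice versa) forces both $f(t_n)-\la(t_n)\to 0$ and $g(t_n)-\la(t_n)\to 0$, hence $f(c)=g(c)=\la(c)$, contradicting Lemma~\ref{l:hzeroiff}. You instead make the same alternation observation but exploit it quantitatively: since $Z_f$ and $Z_g$ are disjoint (again Lemma~\ref{l:hzeroiff}) closed subsets of the compact interval, they are at positive distance $\delta$, so consecutive breakpoints after the first are at least $\delta$ apart and there can be at most roughly $(b-a)/\delta$ of them. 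Both arguments hinge on the same two facts --- continuity of $\la$ and the incompatibility of $f(t)=\la(t)$ with $g(t)=\la(t)$ --- and both are ultimately compactness arguments, but yours is direct rather than by contradiction and has the mild bonus of giving an explicit a priori bound on the number of iterations, which fits the algorithmic spirit of the construction; the paper's version is a little shorter and avoids having to dispose of the degenerate cases (one of $Z_f$, $Z_g$ empty; the possibly short first step) that you correctly handle.
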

\begin{proof}
Let $A \in M_2(\Cab)$ and let $t_0 = a$. For all iterations of the construction we make the same choice of
$\la$ as either $\la^+$ or $\la^-.$  Suppose that construction
\ref{c:EigenVecConstruction} never terminates and yields the sequence
$\{t_n\}$. Since $\{t_n\}$ is increasing and $t_n < b$ for every $n, \ t_n$ has a limit, $c,\ a < c \leq b$. Observe that, for
each $n > 0$, $f(t_n) = \la(t_n)$ or $g(t_n) = \la(t_n)$ by our choice of the
$t_n$. Further observe that if $f(t_n) = \la(t_n)$ then $g(t_{n + 1}) =
\la(t_{n + 1})$ and vice-versa because of the way we choose the $t_n$ and the
assumption that $t_n < b$. Therefore, since $f$, $g$, and $\la$ are continuous,
$\lim_{n \to \infty} f(t_n) - \la(t_n) = 0$ and $\lim_{n \to \infty} g(t_n) -
\la(t_n) = 0$, and so $f(c) = g(c) = \la(c)$. However,
this contradicts Lemma \ref{l:hzeroiff}, and so construction
\ref{c:EigenVecConstruction} must terminate after only finitely many steps.
\end{proof}

Finally, we can prove Theorem \ref{t:DiagonalizeHerm}.

\begin{proof}[Proof of Theorem \ref{t:DiagonalizeHerm}]
Let $A \in M_2(\Cab)$ and let $t_0 = a$. Let ${\bf v(t_0)}$ be the eigenvector
corresponding to $\la^+(t_0)$ for $A(t_0)$. Then applying construction
\ref{c:EigenVecConstruction} and Lemma \ref{l:EigenVecConstructionTerm} yields
${\bf v}_1(t):[a, b] \to \bbC^2$ such that $A(t){\bf v}_1(t) = \la^+(t){\bf
v}_1(t)$ for $t \in [a, b]$.  Similarly, we can get ${\bf v}_2(t):[a, b] \to
\bbC^2$ such that $A(t){\bf v}_2(t) = \la^-(t){\bf v}_1(t)$ for $t \in [a, b]$.

Let ${\bf u}_j(t) = \frac{{\bf v}_j(t)}{\|{\bf v}_j(t)\|_2},\ j = 1, \ 2$. Then, for all $t \in [a, b]$, we have
 that, since $A(t)$ is hermitian and $\la^+(t) \ne \la^-(t)$, ${\bf
v}_1(t) \perp {\bf v}_2(t)$. Therefore, letting $U \in M_2(C[a, b])$ be defined by
$U(t) = [{\bf u}_1(t) ~ {\bf u}_2(t)]$, it follows that $U(t)$ is
unitary and $U(t)^*A(t)U(t)$ is diagonal for all $t$.

Finally, it may be observed that when $h(t)$ is real-valued, and the initial
${\bf v}(t_0)$ is real, construction \ref{c:EigenVecConstruction} yields a
real-valued result. Therefore, if $A$ is real-valued, $U$ will be real-valued.
\end{proof}

We now relax the assumption that the eigenvalues of $A(t)$ are distinct for all $t.$

\begin{theorem} \label{t:diag2}
Let $A \in M_2(\Cab)$ be continuously differentiable and suppose that $A(t)$
has distinct eigenvalues except possibly at finitely many points $t $ and at
those points $A'(t)$ has distinct eigenvalues. Then there exists a unitary
matrix $U \in M_2(\Cab)$ such that $U^* A U$ is diagonal.
\end{theorem}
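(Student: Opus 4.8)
The plan is to reduce to Theorem~\ref{t:DiagonalizeHerm} by diagonalizing on the open intervals between the degeneracy points and then showing the eigenvector frames extend continuously across each degeneracy point, using the hypothesis on $A'$. Let $t^*$ be one of the finitely many points where $\la^+(t^*) = \la^-(t^*)$; as noted after \eqref{e:ExplicitEval-}, this forces $A(t^*) = \la^+(t^*) I$. On a punctured neighborhood $(t^* - \de, t^*) \cup (t^*, t^* + \de)$ the eigenvalues are distinct, so Theorem~\ref{t:DiagonalizeHerm} (or rather Construction~\ref{c:EigenVecConstruction}) produces continuous non-vanishing eigenvector fields there. The issue is entirely local at $t^*$: I must choose the global eigenvalue labeling and the normalized eigenvectors so that ${\bf u}_1, {\bf u}_2$ extend continuously through $t^*$.

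The key computation is to analyze $A(t)$ near $t^*$ via $A(t) = \la^+(t^*) I + (t - t^*) A'(t^*) + o(|t - t^*|)$. Writing $B(t) = \big(A(t) - \tfrac12(\la^+(t) + \la^-(t))I\big)/\big(\tfrac12(\la^+(t) - \la^-(t))\big)$ — i.e. rescale $A(t)$ so that it has eigenvalues $\pm 1$ — one checks from the explicit formulas \eqref{e:ExplicitEval+}–\eqref{e:ExplicitEval-} that $\la^+(t) - \la^-(t) = \sqrt{(f - g)^2 + 4|h|^2}$, and near $t^*$ this behaves like $|t - t^*|$ times $\sqrt{(f'(t^*) - g'(t^*))^2 + 4|h'(t^*)|^2}$, which is nonzero precisely because $A'(t^*)$ has distinct eigenvalues. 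Hence $B(t)$ has a one-sided limit as $t \to t^{*+}$ and as $t \to t^{*-}$: these limits are $\pm C$, where $C$ is the (rescaled) matrix with eigenvalues $\pm 1$ built from $A'(t^*)$, with the sign flipping because of the $(t - t^*)$ factor. The normalized eigenprojections of $B(t)$ therefore have one-sided limits at $t^*$; the eigenprojection for $\la^+$ on the right limits to the spectral projection of $C$ for $+1$, and on the left to the spectral projection of $C$ for $-1$ (and vice versa for $\la^-$). So the eigenvector field for $\la^+$ does \emph{not} extend continuously — but the one for $\la^+$ on the right agrees with the one for $\la^-$ on the left. The fix is to swap the eigenvalue labels across $t^*$: define the continuous eigenvalue function to follow $\la^+$ on one side and $\la^-$ on the other. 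With a smooth-enough relabeling (which is consistent globally as long as the degeneracy points are isolated — each swap is a single transposition and we just track the labeling along $[a,b]$), both eigenvector fields extend continuously across $t^*$, since at $t^*$ the matrix is a scalar and any unit vector is an eigenvector, so we may assign the common limit value.

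Concretely, I would proceed as follows. First, handle the case of a single degeneracy point $t^*$ in the interior; order the points and induct or just treat them one at a time since they are isolated. Second, apply Theorem~\ref{t:DiagonalizeHerm} on $[a, t^* - \de]$ and on $[t^* + \de, b]$ to get unitary-valued $U_\ell$ and $U_r$ there. Third, do the blow-up analysis above to show that after possibly swapping the two columns of $U_r$, the limits $\lim_{t \to t^{*-}} U_\ell(t)$ and $\lim_{t \to t^{*+}} U_r(t)$ both exist and can be arranged to equal a common unitary $U_0$ — using that $A(t^*) = \la(t^*) I$ commutes with everything, so $U_0^* A(t^*) U_0$ is automatically diagonal regardless of which $U_0$ we pick. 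Fourth, interpolate: on the two small intervals $[t^* - \de, t^*]$ and $[t^*, t^* + \de]$ re-run Construction~\ref{c:EigenVecConstruction} but with the boundary data $U_0$ prescribed at $t^*$ (the construction is driven by an initial eigenvector and runs in either direction), so the pieces glue to a continuous unitary on all of $[a,b]$, diagonalizing $A$.

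The main obstacle is the second-order bookkeeping in the blow-up: one must show the rescaled matrix $B(t)$ genuinely has one-sided limits (not just that $\la^+ - \la^-$ is comparable to $|t-t^*|$) and that the eigenprojections converge, which requires the $C^1$ hypothesis to control the $o(|t-t^*|)$ error uniformly and the distinct-eigenvalue condition on $A'(t^*)$ to keep the limiting matrix $C$ non-scalar so its spectral projections are well defined. Everything else — the relabeling being globally consistent, the gluing, the real-valued addendum — is routine once this local limit is established.
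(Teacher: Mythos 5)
Your proposal is correct in outline and shares the paper's central insight --- that the eigenvalue labels must swap across each degeneracy point, and that the hypothesis on $A'(t^*)$ is exactly what keeps the blown-up matrix non-scalar so its eigenprojections converge --- but the execution differs substantially from the paper's. You work locally at each $t^*$: rescale to $B(t) = \big(A(t) - \tfrac12\operatorname{tr}A(t)\,I\big)\big/\tfrac12\big(\la^+(t)-\la^-(t)\big)$, show from the $C^1$ expansion that $B$ has one-sided limits $\pm C$ at $t^*$, conclude the $\la^+$-eigenvector field on the right and the $\la^-$-eigenvector field on the left share a common limit, and then glue the two unitary frames after a column swap. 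The paper handles all degeneracy points at once with no gluing step: Lemma~\ref{l:TwoNormDiff} produces a $C^1$ real function $\mu$ with $|\mu| = \sqrt{(f-g)^2 + 4|h|^2}$ that \emph{changes sign} at each point of $Z$, so $\la = \tfrac12(f+g+\mu)$ is a globally $C^1$ eigenvalue branch that already implements your relabeling; setting $B = A - \la I$ and dividing by a second signed norm $\tau$ (again from Lemma~\ref{l:TwoNormDiff}) gives $C = B/\tau$, continuous on all of $[a,b]$ with distinct eigenvalues --- the l'H\^opital value $B'(t^*)/\tau'(t^*)$ at $t^* \in Z$ is essentially your limiting matrix $C$ --- after which a single application of Theorem~\ref{t:DiagonalizeHerm} finishes. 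What the paper's route buys is that the two sign-changing scalars $\mu$ and $\tau$ silently absorb all the bookkeeping of label swaps and of matching the frames at each $t^*$; what your route buys is a more transparent picture of \emph{why} $A'(t^*)$ enters, since it literally appears as the limiting direction of the rescaled family. One point to tighten: Construction~\ref{c:EigenVecConstruction} cannot be initialized at $t^*$ itself because the eigenvalues coincide there (Lemma~\ref{l:hzeroiff} fails at $t^*$), so your ``interpolate with boundary data $U_0$ prescribed at $t^*$'' step should instead be phrased as running the construction on $(t^*, t^*+\de]$ inward toward $t^*$, then invoking your blow-up estimate to show the resulting frame converges to $U_0$ as $t \to t^{*+}$; similarly on the left.
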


We begin with a lemma.

\begin{lemma} \label{l:TwoNormDiff}
Let ${\bf v}:[a, b] \to \bbR^n$ be continuously differentiable. Suppose that
${\bf v}(t) = 0$ for only finitely many points $t $. Then there exists $\mu:[a,
b] \to \mathbb{R}$ continuously differentiable such that $|\mu(t)| = \|{\bf
v}(t)\|_2$.
\end{lemma}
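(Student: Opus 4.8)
The plan is to show that the obvious candidate --- the Euclidean norm $N(t):=\|{\bf v}(t)\|_2$ --- is already continuously differentiable away from the finitely many zeros of ${\bf v}$, and that its only failure of differentiability is a ``corner'' at each zero, which we remove by flipping the sign of $N$ as $t$ passes through that zero. Let $a=p_0<p_1<\cdots<p_m=b$ be the partition of $[a,b]$ whose points are $a$, $b$, and the zeros of ${\bf v}$; every interior partition point $p_i$ ($1\le i\le m-1$) is then a zero of ${\bf v}$, while ${\bf v}$ is nonvanishing on each open interval $(p_{i-1},p_i)$, so there $N=(v_1^2+\cdots+v_n^2)^{1/2}>0$ is $C^1$ with $N'=\langle{\bf v},{\bf v}'\rangle/N$. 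I will set $\mu:=(-1)^{i-1}N$ on $[p_{i-1},p_i]$ for $i=1,\dots,m$. Then $|\mu|=N=\|{\bf v}\|_2$ automatically, and since $\mu$ is manifestly $C^1$ on the interior of each subinterval, the whole problem reduces to checking that $\mu$ is $C^1$ at the partition points.

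The heart of the matter is the behavior of $N$ near a zero $z:=p_i$. Since ${\bf v}(z)=0$ and ${\bf v}'$ is continuous, the function
\[
{\bf w}(t):=\int_0^1 {\bf v}'(z+u(t-z))\,du
\]
is continuous on $[a,b]$, equals ${\bf v}(t)/(t-z)$ for $t\ne z$, and satisfies ${\bf w}(z)={\bf v}'(z)$. Consequently $N(t)=|t-z|\,\|{\bf w}(t)\|_2$, and for $t>z$ with $t$ not a zero of ${\bf v}$,
\[
N'(t)=\frac{\langle{\bf v}(t),{\bf v}'(t)\rangle}{N(t)}=\frac{\langle{\bf w}(t),{\bf v}'(t)\rangle}{\|{\bf w}(t)\|_2}\text{.}
\]
This tends to $\|{\bf v}'(z)\|_2$ as $t\to z^+$: when ${\bf v}'(z)\ne0$ it is immediate, and when ${\bf v}'(z)=0$ the Cauchy--Schwarz inequality gives $|N'(t)|\le\|{\bf v}'(t)\|_2\to 0=\|{\bf v}'(z)\|_2$. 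The identical computation, the sign change now coming from $|t-z|=-(t-z)$ for $t<z$, gives $N'(t)\to-\|{\bf v}'(z)\|_2$ as $t\to z^-$. Because $N$ is continuous on all of $[a,b]$ and these one-sided limits of $N'$ exist, the mean value theorem shows that $N$ restricted to each closed subinterval $[p_{i-1},p_i]$ is $C^1$ (at an endpoint where ${\bf v}\ne0$ this is clear directly; at a zero endpoint $p_j$ the one-sided derivative equals $\pm\|{\bf v}'(p_j)\|_2$).

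It remains to match the two pieces of $\mu$ at an interior partition point $z=p_i$, which is a zero of ${\bf v}$; write $c:=\|{\bf v}'(z)\|_2\ge0$. The piece on $[p_{i-1},p_i]$ gives $\mu'(z^-)=(-1)^{i-1}N'(z^-)=(-1)^{i-1}(-c)=(-1)^i c$, and the piece on $[p_i,p_{i+1}]$ gives $\mu'(z^+)=(-1)^iN'(z^+)=(-1)^i c$; these agree, so $\mu$ is differentiable at $z$, and since on each side $\mu'$ is a fixed sign times $N'$ and $N'$ has the matching one-sided limits at $z$, $\mu'$ is continuous there. At the endpoints $p_0=a$ and $p_m=b$ nothing needs matching. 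Hence $\mu\in C^1([a,b])$ with $|\mu|=\|{\bf v}\|_2$, as required.

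The step I expect to be the real obstacle is the middle paragraph --- in particular handling a zero $z$ at which ${\bf v}'(z)=0$, where $N$ itself (not merely $\pm N$) must turn out to be $C^1$. The factorization ${\bf v}(t)=(t-z){\bf w}(t)$ with ${\bf w}$ continuous, combined with the Cauchy--Schwarz estimate in that degenerate case, is exactly what makes this work; everything else is bookkeeping with the alternating signs $(-1)^{i-1}$.
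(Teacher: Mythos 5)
Your proof is correct and follows essentially the same route as the paper: partition $[a,b]$ at the zeros of ${\bf v}$, set $\mu=\pm\|{\bf v}\|_2$ with alternating sign, and show the one-sided limits of $N'$ at each interior zero $z$ are $\pm\|{\bf v}'(z)\|_2$ via a factorization ${\bf v}(t)=(t-z){\bf w}(t)$ with ${\bf w}$ continuous (you use the integral form, the paper uses Taylor with Peano remainder --- same content). Your Cauchy--Schwarz treatment of the degenerate case ${\bf v}'(z)=0$ is in fact slightly cleaner than the paper's, which divides by $\|{\bf h}(t)\|$ without remarking that this could vanish along a sequence approaching $z$.
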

\begin{proof}

Suppose there is at least one point in the open interval at which ${\bf v} $ vanishes.
Let $a < t_1 < t_2 < \ldots < t_s < b$ be those
points in $(a, b)$ where ${\bf v}(t_i) = 0$ and, for each $1 \le i \le s+1$,
define
\[
\mu(t)
= (-1)^i \|{\bf v}(t)\|_2
\]
on $[t_{i - 1}, t_i],$ with $t_0 = a$ and $t_{s+1} = b.$

Fix $1 \le i \le s$. Since ${\bf v}(t)$ is continuously differentiable, we
may apply Taylor's theorem to each component of ${\bf v}(t)$ which yields
\[
{\bf v}(t) = {\bf v}(t_i) + {\bf v}'(t_i)(t - t_i) + {\bf h}(t)(t - t_i)
\]
where ${\bf h}(t) \to {\bf 0}$ as $t \to t_i$. By assumption, ${\bf v}(t_i) =
0$, so
\[
{\bf v}(t)
= {\bf v}'(t_i)(t - t_i) + {\bf h}(t)(t - t_i)
= (t - t_i)\big({\bf v}'(t_i) + {\bf h}(t)\big)
\text{.}
\]
Then, for $t \ne t_i$,
\begin{align*}
\frac{d}{dt} \|{\bf v}(t)\|_2
&= \frac{d}{dt} \sqrt{{\bf v}(t) \cdot {\bf v}(t)} \\
&= \frac{{\bf v}(t) \cdot {\bf v}'(t)}{\|{\bf v(t)}\|_2} \\
&= \frac{(t - t_i)\big({\bf v}'(t_i) + {\bf h}(t)\big) \cdot {\bf v}'(t)}{
        \|(t - t_i)\big({\bf v}'(t_i) + {\bf h}(t)\big)\|_2}
\text{.}
\end{align*}
Since ${\bf v}'(t)$ is continuous, if ${\bf v}'(t_i) \ne 0$,
\[
\lim_{t \to t_i^+} \frac{d}{dt} \|{\bf v}(t)\|_2
= \lim_{t \to t_i^+} \frac{(t - t_i){\bf v}'(t_i) \cdot {\bf v}'(t)}{
        |(t - t_i)|\|{\bf v}'(t_i)\|_2}
= \frac{{\bf v}'(t_i) \cdot {\bf v}'(t_i)}{\|{\bf v}'(t_i)\|_2}
= \|{\bf v}'(t_i)\|_2
\]
and if ${\bf v}'(t_i) = 0$,
\[
\lim_{t \to t_i^+} \frac{d}{dt} \|{\bf v}(t)\|_2
= \lim_{t \to t_i^+} \frac{t - t_i}{|t - t_i|}
        \frac{{\bf h}(t)}{\|{\bf h}(t)\|} \cdot {\bf v}'(t)
= 0
= \|{\bf v}'(t_i)\|_2
\]
since  ${\bf v}'(t)
\to 0$ as $t \to t_i$.

Similarly,
\[
\lim_{t \to t_i^-} \frac{d}{dt} \|{\bf v}(t)\|_2
= -\|{\bf v}'(t_i)\|_2
\]
Therefore, by our definition
of $\mu(t)$ and this difference of sign, $\mu'(t)$ is well defined and
continuous at $t_i$.

If ${\bf v}$ never vanishes on $(a, b),$ we can take $\mu(t) = ||{\bf v}(t)||.$
 Note that it is irrelevant whether ${\bf v}$ vanishes at one or both endpoints.

\end{proof}

\begin{proof}[Proof of theorem \ref{t:diag2}.]
Let $h(t) = h_r(t) + ih_c(t)$ where $h_r$ and $h_c$ are real-valued.
 For  $t \in [a, b]$, the eigenvalues of $A(t)$ are as in equations
(\ref{e:ExplicitEval+}) and (\ref{e:ExplicitEval-}).

Let $Z \subseteq [a, b]$ be the set of points $t$ at which $A(t)$ does not have
distinct eigenvalues; then $|Z| < \infty$ by hypothesis.  Let 
\[
{\bf v}(t) = [f(t) - g(t), 2h_r(t), 2h_c(t)]^T
\]
and observe that ${\bf v}(t) = 0$ exactly on $Z$. Applying Lemma
\ref{l:TwoNormDiff} to ${\bf v(t)}$, there is some continuously differentiable
$\mu:[a, b] \to \mathbb{R}$ such that
\[
|\mu(t)|
= \|{\bf v}(t)\|_2
= \sqrt{(f(t) - g(t))^2 + 4(h_r^2(t) + h_c^2(t))}
\text{.}
\]
Define $\la(t) = \frac{1}{2} [f(t) + g(t) + \mu(t)]$; then $\la(t)$ is
continuously differentiable and $\la(t)$ is an eigenvalue for $A(t)$ for all $t
\in [a, b]$. Let $B(t) = A(t) - \la(t) I$. Then $B(t)$ is continuously
differentiable, $B \ne 0$ on $[a, b] \setminus Z$ and $B'(t) \ne 0$ for $t \in
Z$. Applying Lemma \ref{l:TwoNormDiff} to $B(t)$, there exists $\tau:[a, b]
\to \mathbb{R}$ continuously differentiable such that $|\tau(t)| = \|B(t)\|_2$
for all $t \in [a, b]$ (where $B(t)$ is treated as a vector with $4$
entries). Next define
\[
C(t) = \frac{1}{\tau(t)} B(t)
\]
and observe that, by l'H\^opital's rule, we can define $C(t) =
\frac{B'(t)}{\tau'(t)}$ for $t \in Z$ and $C(t)$ will be continuous on $[a,
b]$. Since $\det:M_2(\mathbb{C}) \to \mathbb{C}$ is continuous and $\det(C(t))
= 0$ for all $t \not \in Z$, $\det(C(t)) \equiv 0$ on $[a, b]$. Furthermore,
for any $t \in [a, b]$, if $t \not \in Z$ then $C(t)$  has distinct
eigenvalues and if $t \in Z$, then $C(t) = \frac{B'(t)}{\tau'(t)}$ which has
distinct eigenvalues. Therefore, $C(t)$ is well-defined with distinct
eigenvalues and so by Theorem \ref{t:DiagonalizeHerm} we can find $U \in
M_2(\Cab)$ such that $U(t)$ is unitary and $U(t)^*C(t)U(t)$ is diagonal for all
$t \in [a, b]$. Then
\begin{align*}
U(t)^*A(t)U(t)
&= U(t)^*(B(t) + \la(t) I)U(t) \\
&= U(t)^*B(t)U(t) + \la(t) I \\
&= \tau(t)U(t)^*C(t)U(t) + \la(t) I
\end{align*}
is diagonal.
\end{proof}

\begin{example} \label{e:Cinftynotenough}
Let
\[ \vpi(t) =
\begin{cases}
\exp(-1/t^2) \text{ if } t \neq 0 \\
0 \text{ if } t = 0
\end{cases} \]
and let
\[ \psi(t) =
\begin{cases}
0 \text{ if } t \leq 0 \\
\vpi(t) \text{ if } t > 0
\end{cases} .\]
Note that both $\vpi$ and $\psi$ are C$^{\infty}$ functions, hence the matrix

\[
A(t) = 
\begin{bmatrix}
\vpi(t) & \psi(t) \\
\psi(t) & \psi(t) \\
\end{bmatrix}
=
\vpi(t) \left[
\begin{matrix}
1 &  \chi \\
\chi &  \chi
\end{matrix}
\right] 
\]
is  C$^{\infty}$ on $[-1, 1]$ , where $\chi$ is the characteristic function of $[0, 1].$  Note that $A(t)$ has repeated eigenvalues only at $t = 0$.

Now $0$ is an eigenvalue for $A(t)$ for every $t$. However,
$\begin{bmatrix} 0 \\ 1 \end{bmatrix}$ is an eigenvector for $0$ for $t \in [-1,
0)$ and $\begin{bmatrix} 1 \\ -1 \end{bmatrix}$ is an eigenvector for $0$ for $t
\in [0, 1]$. Therefore, $A(t)$ cannot be continuously diagonalized.
\end{example}

Since differentiability alone is not sufficient to guarantee diagonalizability of hermitian elements, one can ask
if analyticity suffices.

\begin{question} \label{q:analytic}
Let $A \in \MnCab$ be hermitian. Writing the entries of $A$ as in (\ref{e:def2x2matrix}), assume there is a domain $\sD \subset \bbC$ whch contains $[a, b]$
and that $f,\ g, \ h$ are analytic in $\sD$ with $f, \ g$ real on $[a, b].$ Is $A$ diagonalizable over $[a, b] ?$

\end{question}


\begin{thebibliography}{20}




\bibitem{GP} K. Grove and G. K. Pedersen. ``Diagonalizing Matrices over {\em C(X)}".  {\em Journal of Functional Analysis} {\bf 59}:1 (1984), 65-89.

\bibitem{HJ} R. Horn and C. Johnson. {\em Matrix Analysis}.  Cambridge University Press, 1985.

\bibitem{K} R. V. Kadison. ``Diagonalizing Matrices over Operator Algebras". {\em Bulletin of the American Mathematical Society} {\bf 8}:1 (1983), 84-86.


\end{thebibliography}
\end{document}